\newtheorem{theorem}{Theorem}[section]
\newtheorem{corollary}[theorem]{Corollary}
\newtheorem{proposition}[theorem]{Proposition}
\theoremstyle{definition}
\newtheorem{definition}[theorem]{Definition}
\theoremstyle{remark}
\newtheorem{remark}[theorem]{Remark}
\numberwithin{equation}{section}
\newcommand{\ep}{\epsilon}
\newcommand{\del}{\delta}
\newcommand{\reals}{\mathbb{R}}
\newcommand{\nats}{\mathbb{N}}
\newcommand{\inv}{^{-1}}
\newcommand{\ovl}{\overline}
\newcommand{\Hdim}{\mathcal{H}}
\newcommand{\subeq}{\subseteq}
\newcommand{\bslash}{\backslash}
\def\loc{\operatorname{loc}}
\def\diam{\operatorname{diam}}
\def\dist{\operatorname{dist}}
\def\W{\operatorname{W}}
\def\XXint#1#2#3{{\setbox0=\hbox{$#1{#2#3}{\int}$}
\vcenter{\hbox{$#2#3$}}\kern-.5\wd0}}
\author[Z.M. Balogh]{Zolt\'an M. Balogh}
\author[J.T. Tyson]{Jeremy T. Tyson}
\author[K. Wildrick]{Kevin Wildrick}
\address{Z. M. Balogh: Mathematisches Institut, Universit\"at Bern, Sidlerstrasse 5, 3012 Bern, Switzerland ({\tt balogh.zoltan@math.unibe.ch})}
\address{J. T. Tyson: Department of Mathematics, University of Illinois at Urbana-Champaign, 1409 W Green Street, Urbana, IL 61801, USA ({\tt tyson@math.uiuc.edu})}
\address{K. Wildrick: Department of Mathematical Sciences, Wilson Hall, Montana State University, Bozeman, MT 59717, USA ({\tt kevin.wildrick@montana.edu})}
\keywords{Sobolev mapping, quasiconformal mapping, foliation, dimension distortion \\ 2010 \emph{Mathematics subject classification.} Primary: 30C65, 28A78; Secondary: 46E35}
\thanks{The first and third authors were supported by the Swiss National Science Foundation, European Research Council Project CG-DICE. The second author was supported by NSF grant DMS-1201875 and Simons Foundation Collaborative Grant \#353627.}
\begin{document}

\title[]{Quasiconformal mappings that highly distort dimensions of many parallel lines}

\date{\today}
\begin{abstract}
We construct a quasiconformal mapping of $\reals^n$, $n \geq 2$, that simultaneously distorts the Hausdorff dimension of a nearly maximal collection of parallel lines by a given amount. This answers a question of Balogh, Monti, and Tyson.
\end{abstract}

\maketitle
\begin{center}
\it Dedicated to Jussi V\"ais\"al\"a on the occasion of his 80\textsuperscript{th} birthday
\end{center}

\section{Introduction}

Despite their importance in a wide variety of mathematical settings, the family of quasiconformal mappings of $\reals^n$, $n > 2$, remains somewhat mysterious. Excellent introductions to the theory of quasiconformal mappings can be found in the monographs \cite{Resh} and \cite{VaisBook}. A core philosophical question is `how many such mappings are there?'. The richness of the class of quasiconformal mappings of $\reals^n$ is demonstrated in part by the existence of mappings which simultaneously and uniformly increase the Hausdorff dimension (denoted throughout this paper simply by $\dim$) of many leaves of a foliation of $\reals^n$. Such behavior, which cannot occur for smooth or even Lipschitz mappings, reflects the genuinely nonsmooth nature of quasiconformal mappings.


The results of \cite{BMT} provide bounds on the distortion of dimension of leaves of a foliation by a Sobolev mapping in terms of the desired dimension of the image of the leaf and the Sobolev exponent. Let us explain these bounds in a simplified setting. Let $n \geq 2$ be an integer, and let $L$ be any one-dimensional vector subspace of $\reals^n$. We consider the foliation $\{a+L\,:\,a \in L^\perp\}$ of $\reals^n$ by lines parallel to $L$. The absolute continuity along lines of a supercritical Sobolev mapping $f \in \W^{1,p}(\reals^n, \reals^N)$, $N \in \nats$, implies that
$$\dim(f(a+L)) = 1$$
for $\Hdim^{n-1}$-almost every $a \in L^\perp$. On the other hand, a folklore theorem (see, for instance, \cite{Kaufman}) states that
$$\Hdim^{p/(p-(n-1))}(f(a+L))=0$$
for \emph{any} (i.e., for $\Hdim^0$-almost every) $a \in L^\perp$. The following theorem from \cite{BMT} interpolates between these two results.

\begin{theorem}[Balogh--Monti--Tyson]\label{QC_lines}  
For $p>n \geq 2$, let $f \in \W^{1,p}(\reals^n;\reals^N)$, and let $\alpha \in \left(1,\frac{p}{p-(n-1)}\right]$. Then 
$\Hdim^{\alpha}(f(a+L))=0$ for $\Hdim^{\beta}$-almost every $a \in L^\perp$, where
$$\beta = (n-1)-p\left(1-\frac{1}{\alpha}\right).$$
\end{theorem}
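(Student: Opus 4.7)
The plan is to derive, via Morrey's inequality and a dyadic covering, a pointwise estimate of the form
\[
\Hdim^\alpha_\infty(f(a+L) \cap Q) \leq C \left(\frac{\nu(B_{n-1}(a, Cr))}{r^\beta}\right)^{\alpha/p} \quad \text{for every } r > 0,
\]
where $\nu$ is the finite measure on $L^\perp$ with density $g(a) := \int_\reals |\nabla f(a, t)|^p \, dt$, and then to conclude by verifying that $\liminf_{r \to 0} \nu(B(a, r))/r^\beta = 0$ for $\Hdim^\beta$-a.e.\ $a$. After rotating so that $L = \reals e_n$ and decomposing $\reals^n$ into countably many unit cubes, I can work in a fixed cube $Q = [0,1]^n$.

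For the content estimate, I would partition $(a+L) \cap Q$ into $2^k$ intervals $I_j$ of length $r = 2^{-k}$ centered at points $t_j$. Morrey's inequality, valid since $p > n$, gives $\osc_{I_j} f \leq C r^{1-n/p} \|\nabla f\|_{L^p(B_n((a, t_j), 2r))}$. Summing $\alpha$-th powers, invoking the power-mean inequality (applicable because $\alpha \leq p/(p-(n-1)) \leq p$ yields $\alpha/p \leq 1$), and exploiting the bounded overlap of the balls $B_n((a, t_j), 2r)$ inside the tube $B_{n-1}(a, Cr) \times [-Cr, 1+Cr]$ together with Fubini to estimate the tube integral of $|\nabla f|^p$ by $\nu(B_{n-1}(a, Cr))$, produces
\[
\Hdim^\alpha_\infty(f(a+L) \cap Q) \leq C\, r^{\alpha(1-n/p) - (1-\alpha/p)}\, \nu(B_{n-1}(a, Cr))^{\alpha/p}.
\]
The algebraic identity $\alpha(1-n/p) - (1-\alpha/p) = -\alpha\beta/p$, which is exactly the content of the definition $\beta = (n-1) - p(1-1/\alpha)$, converts this to the target estimate. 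Note that $\nu$ is automatically finite---with $\|g\|_{L^1} = \|\nabla f\|_p^p$ by Fubini---and absolutely continuous with respect to $(n-1)$-dimensional Lebesgue measure on $L^\perp$.

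The final step is a density lemma: for $\beta < n-1$, any finite measure $\nu$ on $\reals^{n-1}$ absolutely continuous with respect to Lebesgue measure satisfies $\liminf_{r \to 0} \nu(B(a, r))/r^\beta = 0$ for $\Hdim^\beta$-a.e.\ $a$. I would prove this by truncation. Writing $g = g_M + g^M$ with $g_M := \min(g, M)$ and $g^M := (g-M)^+$, the bounded piece $\nu_M := g_M\, \mathcal{L}^{n-1}$ has $\nu_M(B(a, r))/r^\beta \leq C M r^{n-1-\beta} \to 0$ at \emph{every} point $a$, so the $\liminf$ for $\nu$ equals that for $\nu^M := g^M\, \mathcal{L}^{n-1}$. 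A $5r$-covering lemma applied to $\{a : \liminf_r \nu^M(B(a, r))/r^\beta \geq \epsilon\}$ bounds its $\Hdim^\beta$-measure by $C \|g^M\|_1/\epsilon$, and letting $M \to \infty$ forces this measure to vanish; a countable union over $\epsilon = 1/k$ completes the argument. The boundary case $\beta = 0$ (i.e., $\alpha = p/(p-(n-1))$) is trivial since absolute continuity of $\nu$ already gives $\nu(B(a, r)) \to 0$ at every $a$.

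The main obstacle is the density lemma. A generic Frostman-type estimate only yields $\Hdim^\beta(\{a : \limsup_r \nu(B(a, r))/r^\beta > t\}) \leq C/t$, which forces neither the $\limsup$ nor even the $\liminf$ to vanish on a set of full $\Hdim^\beta$-measure. The leverage comes from the fact that $\nu$ is absolutely continuous with respect to Lebesgue measure on $L^\perp$---a free byproduct of Fubini applied to $|\nabla f|^p$---and this structural fact, combined with truncation at a height $M \to \infty$, upgrades the Frostman-type decay from $O(r^\beta)$ to $o(r^\beta)$ along a sequence for $\Hdim^\beta$-a.e.\ $a$.
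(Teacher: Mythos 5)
This statement is quoted by the paper from the reference [BMT] and is not proved in the present text, so there is no internal proof to compare against. Your argument is correct and is essentially the original Balogh--Monti--Tyson proof: a Morrey oscillation estimate on $r$-subintervals of $a+L$, the power-mean inequality with exponent $\alpha/p\le 1$, and Fubini over the tube give the content bound $\Hdim^\alpha_\infty(f((a+L)\cap Q))\le C\bigl(\nu(B(a,Cr))r^{-\beta}\bigr)^{\alpha/p}$, and the exponent bookkeeping $\alpha(1-n/p)-(1-\alpha/p)=-\alpha\beta/p$ checks out. Your density lemma --- that a finite measure absolutely continuous with respect to $\mathcal{L}^{n-1}$ has vanishing lower $\beta$-density at $\Hdim^\beta$-a.e.\ point, proved by splitting off the truncated bounded part and applying a $5r$-covering bound to the unbounded remainder with $\|g^M\|_1\to 0$ --- is exactly the point where absolute continuity is needed, and your handling of the endpoint $\beta=0$ is also correct.
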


According to Gehring's celebrated higher integrability theorem \cite{Gehring73}, each quasiconformal mapping of $\reals^n$ lies in $\W^{1,p}(\reals^n;\reals^n)$ for some $p>n$. Thus, Theorem \ref{QC_lines} has the following corollary.

\begin{corollary}[Balogh--Monti--Tyson]\label{qc_cor}
Let $f \colon \reals^n \to \reals^n$, $n \geq 2$ be a quasiconformal mapping. For each $\alpha \in (1,n)$ and for $\Hdim^\beta$-almost every $a \in L^\perp$, we have $\Hdim^{\alpha}(f(a+L))=0$, where $\beta=(n/\alpha)-1$.
\end{corollary}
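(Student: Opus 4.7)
The plan is to invoke Gehring's higher integrability theorem to place $f$ in $\W^{1,q}_\loc$ for some $q$ just above $n$, apply Theorem~\ref{QC_lines} at that exponent, and then absorb the gap between the resulting exceptional dimension and the target $n/\alpha-1$ using monotonicity of Hausdorff measure. By Gehring, $f\in\W^{1,p_0}_\loc(\reals^n;\reals^n)$ for some $p_0=p_0(n,K)>n$, so $f\in\W^{1,q}_\loc$ for every $q\in[n,p_0]$. Fixing $\alpha\in(1,n)$, I would first pick $q=q(\alpha)\in(n,p_0]$ with $\alpha\le q/(q-(n-1))$: this is possible because $q\mapsto q/(q-(n-1))$ is continuous and strictly decreasing on $(n-1,\infty)$ with $\lim_{q\searrow n}q/(q-(n-1))=n$, so $\alpha<n$ guarantees such a $q$ exists and places $\alpha$ in the admissible range of Theorem~\ref{QC_lines}.

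Next, I would localize. For each $R\in\nats$, extend $f|_{B(0,R)}$ to an element of $\W^{1,q}(\reals^n;\reals^n)$ by a standard Sobolev extension and apply Theorem~\ref{QC_lines} to obtain $E_R\subset L^\perp$ with $\Hdim^{\beta(q,\alpha)}(E_R)=0$, where $\beta(q,\alpha)=(n-1)-q(1-\tfrac{1}{\alpha})$, and $\Hdim^\alpha(f((a+L)\cap B(0,R)))=0$ for every $a\in L^\perp\setminus E_R$. Setting $E:=\bigcup_{R\in\nats}E_R$, countable subadditivity gives $\Hdim^{\beta(q,\alpha)}(E)=0$, and for $a\in L^\perp\setminus E$ the decomposition $f(a+L)=\bigcup_{R\in\nats}f((a+L)\cap B(0,R))$ combined with countable subadditivity of $\Hdim^\alpha$ yields $\Hdim^\alpha(f(a+L))=0$.

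It remains to upgrade the exceptional dimension from $\beta(q,\alpha)$ to $n/\alpha-1$. A direct computation shows $\beta(n,\alpha)=n/\alpha-1$ and $\beta(q,\alpha)<n/\alpha-1$ whenever $q>n$ and $\alpha>1$, so the exceptional dimension delivered by Theorem~\ref{QC_lines} is in fact strictly smaller than the target. The elementary monotonicity $\Hdim^s(A)=0 \Longrightarrow \Hdim^t(A)=0$ for all $t>s$ (immediate from $(\diam U)^t\le(\diam U)^s$ on $\delta$-covers with $\delta\le 1$) then promotes $\Hdim^{\beta(q,\alpha)}(E)=0$ to $\Hdim^{n/\alpha-1}(E)=0$, completing the argument. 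I do not anticipate a serious obstacle; the one subtle point worth flagging is that the sharp exceptional dimension $n/\alpha-1$ corresponds to the forbidden endpoint $p=n$ in Theorem~\ref{QC_lines} and must be reached as a limit from above, with the gap absorbed by Hausdorff-measure monotonicity rather than by any refinement of Theorem~\ref{QC_lines} itself.
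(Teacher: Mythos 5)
Your proposal is correct and follows the same route as the paper, which derives the corollary in one line from Gehring's higher integrability theorem combined with Theorem~\ref{QC_lines}. You simply make explicit the details the paper leaves implicit: choosing the Sobolev exponent $q$ close enough to $n$ so that $\alpha<n$ lies in the admissible range, localizing to handle the fact that Gehring gives only local integrability, and using monotonicity of Hausdorff measures in the exponent to pass from the exceptional dimension $(n-1)-q(1-\tfrac{1}{\alpha})$ up to the endpoint value $\tfrac{n}{\alpha}-1$.
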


Theorem \ref{QC_lines} is sharp in the following sense. Given $p>n$ and $\alpha$ and $\beta$ as in the statement, and for any integer $N > \alpha$, there exists a mapping $f \in \W^{1,p}(\reals^n, \reals^N)$ with the property that for $\Hdim^{\beta}$-almost every $a \in L^\perp$,
$$\dim(f(a+L)) = \alpha.$$
Such a mapping is constructed in \cite{BMT} by a random method (which is based on a construction of Kaufman). These mappings are unlikely to be injective, much less quasiconformal.

As evidenced by both the Riemann Mapping Theorem and the measurable Riemann Mapping Theorem, the class of quasiconformal mappings in $\reals^2$ is particularly rich, and so it is reasonable to expect that Corollary \ref{qc_cor} should also be sharp, at least when $n=2$. This expectation was confirmed by Bishop, Hakobyan, and Williams \cite{BHW}, who proved the following theorem.

\begin{theorem}[Bishop--Hakobyan--Williams]\label{BHW} 
Fix $\alpha \in [1,2)$ and a one-dimensional linear subspace $L \subset \reals^2$. For any $\beta < (2/\alpha)-1$, there is a set $E \subeq L^\perp$ and a quasiconformal mapping $f \colon \reals^2 \to \reals^2$ so that
\begin{equation}\label{haus} \dim(f(a+L)) \geq \alpha
\end{equation}
for each $a \in E$, where
\begin{equation}
 \dim(E) > \beta.
 \end{equation}
\end{theorem}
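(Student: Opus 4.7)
After rotation we may assume $L = \reals \times \{0\}$. The strategy is to build $f$ as the uniform limit of piecewise conformal (or piecewise affine) quasiconformal maps that iteratively fold horizontal lines into self-similar fractal curves, while $E$ is the support of the natural vertical Cantor measure of the construction. Fix $\beta^* \in (\beta, (2/\alpha) - 1)$. Choose an integer branching number $M \geq 2$ and a vertical contraction ratio $\sigma \in (0,1)$ satisfying $\log M / \log(\sigma^{-1}) = \beta^*$, so that the $M$-branching self-similar Cantor set $E \subset [0,1]$ with contraction $\sigma$ has $\dim E = \beta^* > \beta$, giving the lower bound on $\dim(E)$ directly.

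Within each level-$n$ horizontal strip of height $\sigma^n$ coming from the vertical Cantor construction, introduce an $M$-branching horizontal refinement, in which the parent strip is partitioned into $M$ child substrips that are horizontally offset relative to one another. The offsets are chosen so that, under the composite map $f_n$, a horizontal fiber at a height in the level-$n$ Cantor construction is folded into a self-similar zigzag whose Hausdorff dimension is exactly $\alpha$; the balance condition here is essentially the equality version of the inequality $\beta^* < (2/\alpha) - 1$. Define $f_n$ to be the piecewise conformal homeomorphism that maps each level-$n$ source strip onto the corresponding target strip in the planned zigzag pattern, extended quasiconformally over the complementary ``gap'' regions (for instance, via a Beurling--Ahlfors style extension or an explicit piecewise affine extension on a compatible triangulation). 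The self-similarity of the partitions at all scales ensures that the dilatation of $f_n$ is bounded by a constant $K = K(M, \sigma, \alpha, \beta^*)$ \emph{independent of $n$}, so that compactness of $K$-quasiconformal mappings yields a $K$-quasiconformal uniform limit $f \colon \reals^2 \to \reals^2$.

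\textbf{Main obstacle and conclusion.} The central technical difficulty is twofold: first, the piecewise conformal building blocks of $f_n$ must be glued into a global homeomorphism, which requires carefully matched boundary identifications on adjacent Cantor pieces at every scale; second, the horizontal folding amplitude needed to push $\dim f(a + L)$ up to $\alpha$ produces anisotropy in the affine model that must be reconciled with the vertical Cantor scaling $\sigma$ encoding $\dim E = \beta^*$. The hypothesis $\beta < (2/\alpha) - 1$ is exactly the threshold at which this reconciliation can be carried out with dilatation bounded uniformly in $n$, matching the sharpness of Corollary~\ref{qc_cor}. Once $f$ is constructed and shown to be $K$-quasiconformal, the inequality $\dim E \geq \beta$ is immediate from the construction, and $\dim f(a + L) \geq \alpha$ for each $a \in E$ follows from the mass distribution principle applied to the natural self-similar measure supported on the limit zigzag $f(a + L \cap [0,1])$.
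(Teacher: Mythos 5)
First, note that the paper does not prove this statement itself: Theorem~\ref{BHW} is quoted from \cite{BHW}, and the closest thing to a proof in this paper is the specialization of Theorem~\ref{BMT sharp} to $n=2$ with $p$ close to $2$. That construction works by a mechanism genuinely different from yours: rather than folding each line $a+L$ into a curve of dimension $\alpha$, it arranges a product iterated function system in the source whose invariant set meets each fiber $\pi\inv(a)$, $a \in E$, in a totally disconnected self-similar set of dimension \emph{less than} $1$, and then maps that Cantor subset onto a combinatorially equivalent self-similar set with a larger contraction ratio, of dimension greater than $\alpha$. Since $f(a+L)$ contains this image, the dimension bound follows from the Moran--Hutchinson formula, and quasiconformality only has to be checked once, for a single piecewise-linear map between two multiply connected domains, after which it propagates to all scales by similarities (Proposition~\ref{qc pasting}).

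Your proposal, by contrast, is an outline of the ``folding'' strategy, and as written it has a genuine gap: the two claims on which everything rests are asserted rather than proved. (a) You claim the dilatation of $f_n$ is bounded by $K(M,\sigma,\alpha,\beta^*)$ independently of $n$. This is exactly where the hypothesis $\beta < (2/\alpha)-1$ must enter quantitatively: to fold a horizontal segment of length $\delta$ into a zigzag of dimension $\alpha$ the map must expand along the line roughly by $\delta^{1/\alpha - 1}$ at scale $\delta$, and for bounded dilatation the transverse (vertical) expansion must be comparable, which constrains the vertical contraction $\sigma$ and hence $\dim E$. You name this as ``the central technical difficulty'' and state that the hypothesis is ``exactly the threshold,'' but you never write down the inequality that the parameters must satisfy or verify that your choice of $M$, $\sigma$, and the offsets meets it; without that, the uniform $K$ is unsupported and the compactness argument has nothing to apply to. (b) The lower bound $\dim f(a+L) \geq \alpha$ is deferred to ``the mass distribution principle applied to the natural self-similar measure,'' but the image of $a+L$ under the limit map is not literally a self-similar set (it is a limit of images under varying piecewise conformal maps), so you would need to exhibit the measure on $f(a+L)$ and verify the ball-mass estimate $\mu(B(x,r)) \lesssim r^{\alpha}$, which requires separation/overlap control of the zigzag pieces that the sketch does not provide. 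In addition, invoking ``compactness of $K$-quasiconformal mappings'' gives a convergent subsequence but not, by itself, that the limit restricted to $a+L$ has the intended fractal image; you need the $f_n$ to stabilize outside successively smaller pieces. These are not cosmetic omissions: in the paper's route each of these points is replaced by an explicit parameter choice (the list \eqref{min}) and a one-line self-similarity computation, which is precisely what makes the argument close.
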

In fact, the authors of \cite{BHW} constructed a function $f$ as above with the property that 
$$\dim(f(F))=\alpha \dim(F)$$
for any $a \in E$ and for any Borel subset $F$ of $a+L$; this additional conclusion is substantially more interesting and more difficult to accomplish than the result which we described in Theorem \ref{BHW}. 

The construction in \cite{BHW} makes substantial use of conformal mappings and is therefore restricted to the planar case. When $n > 2$, the paucity of conformal mappings makes sharpness of Corollary \ref{qc_cor} much less clear. The paper \cite{BMT} contains a result analogous to Theorem \ref{BHW} for quasiconformal mappings in any dimension $n \geq 2$, but with Hausdorff dimension replaced by upper Minkowski dimension in \eqref{haus}. The images of all but countably many lines under that mapping are locally rectifiable and hence do not exhibit any Hausdorff dimension increase. A similar example with Hausdorff dimension in the target must necessarily proceed along different lines, and the authors of \cite{BMT} asked whether an optimal example exists.

In this work, we give a construction, different from all those mentioned above, that shows the sharpness of Theorem~\ref{QC_lines} for quasiconformal mappings in every dimension, taking into account the Sobolev exponent. Here is our main result.

\begin{theorem}\label{BMT sharp}  
Let $p>n \geq 2$, fix
$$\alpha \in \left[1,\frac{p}{p-(n-1)}\right),$$
and let $L$ be a one-dimensional linear subspace of $\reals^n$. For any
$$\beta <(n-1)- p\left(1-\frac{1}{\alpha}\right),$$
there is a set $E \subeq L^\perp$ with $\dim E > \beta$ and a quasiconformal mapping $\Phi$ in $\W^{1,p}(\reals^n;\reals^n)$ such that $\dim \Phi(L+a)> \alpha$ for each $a \in E$.
\end{theorem}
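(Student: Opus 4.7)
The strategy is to construct $\Phi$ as an infinite composition of scaled quasiconformal bumps arranged along a Cantor-like hierarchy of lines parallel to $L$, each implementing a snowflake-type deformation. Choose auxiliary parameters $\alpha' \in (\alpha, p/(p-(n-1)))$ and $\beta' \in (\beta, (n-1) - p(1-1/\alpha'))$, and set $s := 1/\alpha'$; note that $s > 1 - n/p$. Pick a large integer $R$, set $M := \lfloor R^{\beta'} \rfloor$ and $r_k := R^{-k}$. Build the Cantor set $E \subset L^\perp$ by the usual self-similar construction: each parent cube of side $r_{k-1}$ contains $M$ well-separated children of side $r_k$. Then $\dim E \geq \beta' > \beta$, and $\beta' < n-1$ guarantees that the children fit in $L^\perp$ with room to spare.

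The building block is a $K$-quasiconformal map $\psi \co \reals^n \to \reals^n$, identity outside $B(0, 1)$, with $K = K(\alpha', n)$, that deforms the segment $L \cap B(0,1)$ into a snowflake-like arc of Hausdorff dimension $\alpha'$ and whose derivative satisfies $|\nabla \psi(y)| \sim \dist(y, L)^{s-1}$ for $y$ near $L$ inside the bump. In $\reals^2$ such a $\psi$ arises classically from conformal welding, as in the proof of Theorem \ref{BHW}; in higher dimensions $\psi$ must be constructed directly by an explicit cylindrically symmetric snowflake-type deformation, with dilatation controlled by $\alpha'$ and $n$. For each generation $k$ and each Cantor center $a_j^{(k)} \in L^\perp$, place a scaled copy $\psi_j^{(k)}$ of $\psi$ of linear scale $r_{k-1}$ supported in a tubular neighborhood of $a_j^{(k)} + L$ inside the parent Cantor region. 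The inequality $\beta' < n-1$ guarantees that the supports at any one generation are pairwise disjoint, and they nest consistently along the Cantor hierarchy. Define $\Phi := \lim_{N \to \infty} \Phi_N \circ \cdots \circ \Phi_1$, where $\Phi_k$ is the product of generation-$k$ bumps.

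Three properties must be verified. \emph{Quasiconformality:} each $\Phi_k$ is $K$-QC; using the disjointness of bumps at each scale together with careful arrangement of the cross-scale nesting, the composition $\Phi$ has dilatation $K'$ uniform in $N$. \emph{Dimension distortion:} for each $a \in E$, the line $a + L$ lies inside generation-$k$ bumps for every $k$, and the accumulated snowflake stretching yields $\dim \Phi(a + L) \geq \alpha' > \alpha$ via a standard self-similar covering argument. \emph{$\W^{1,p}$ bound:} the derivative of $\Phi$ is concentrated in a tubular neighborhood of $E + L$ with the scaling $|\nabla \Phi(x)| \sim \dist(x, E + L)^{s-1}$ reflecting the iterated snowflake construction. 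Combining this with the $(\beta'+1)$-dimensional Minkowski content of $E + L$ in $\reals^n$ gives
\begin{equation*}
\int_{\reals^n} |\nabla \Phi|^p \, dx \sim \int_0^1 \delta^{(n - 2 - \beta') + p(s-1)} \, d\delta,
\end{equation*}
finite exactly when $\beta' < (n-1) - p(1-s)$, as arranged by our choice of $\beta'$.

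The main technical obstacle lies in the direct construction of the building block $\psi$ in dimensions $n \geq 3$ (where conformal methods are unavailable) and in the verification that the infinite composition $\Phi$ remains quasiconformal with dilatation independent of the number of stages; the latter requires the cross-scale bump arrangement to be orchestrated so that nested bumps do not compound the dilatation beyond a uniform bound.
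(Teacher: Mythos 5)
Your plan follows the Bishop--Hakobyan--Williams template (iterated snowflake bumps along a Cantor family of lines), and it founders on exactly the two points that make the higher-dimensional case hard. First, the building block $\psi$ is asserted, not constructed: you need a $K(\alpha',n)$-quasiconformal map of $\reals^n$, equal to the identity off a ball, sending a segment of $L$ to an arc of dimension $\alpha'$ with the precise derivative asymptotics $|\nabla\psi(y)|\sim\dist(y,L)^{s-1}$. In the plane this comes from conformal welding, but for $n\geq 3$ no such ``explicit cylindrically symmetric snowflake deformation'' with controlled dilatation is available off the shelf, and producing one is essentially the open difficulty that motivated the question; labelling it ``the main technical obstacle'' does not discharge it. Second, the uniform quasiconformality of $\Phi=\lim \Phi_N\circ\cdots\circ\Phi_1$ cannot follow from disjointness of the bumps \emph{within} a generation: by design, every point of $a+L$ with $a\in E$ lies inside a generation-$k$ bump for \emph{every} $k$ (this is what accumulates the dimension increase), so at precisely the points where the distortion must occur the dilatations compound like $K^N$. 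Some mechanism must prevent this --- in the plane BHW use conformality of the pieces away from a thin set --- and you offer none for $n\geq 3$. The $\W^{1,p}$ estimate is likewise only heuristic, since the claimed gradient asymptotics for the full composition are not justified.

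For contrast, the paper avoids both issues by abandoning snowflake arcs altogether. It builds a single piecewise-linear quasiconformal map $\phi$ between the complements of the first-generation pieces of two combinatorially identical iterated function systems (a product system $\mathcal{K}\times\mathcal{K}'$ in the source, subordinate to the foliation by lines, and a system $\mathcal{J}$ with a larger contraction ratio $t$ in the target), and extends it to all of $\reals^n$ by conjugating with the similarities (Proposition \ref{qc pasting}). Because the extension is by rigid similarities, no dilatation accrues across generations, and quasiconformality at points of the invariant set is checked directly from the metric definition. The image of each fiber $\pi\inv(a)$, $a\in K_{\mathcal K}$, then meets a self-similar Cantor set of dimension $\log M'/\log t\inv>\alpha$ (no snowflake curve is needed), and the $\W^{1,p}$ bound reduces to the convergence of the geometric series with ratio $MM'(t/d)^p d^n<1$. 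If you want to salvage your approach, you would need to either construct the higher-dimensional building block with uniform control under infinite composition, or restructure the iteration so that each point sees only boundedly many non-conformal layers --- at which point you have essentially rediscovered the similarity-elevator argument.
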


Our construction is a simple example of a `conformal elevator', an idea which appeared already in the work of Gehring and V\"ais\"al\"a \cite{GV} and has proven useful in dynamics \cite{Sullivan}, \cite{Pilgrim}, \cite{BonkMeyer}. Roughly speaking, we construct a single quasiconformal mapping between two multiply connected domains, then use iterated function systems of contracting similarities to recreate this mapping in the bounded complementary components of the domains. As we use rigid similarities rather than conformal mappings, it is more accurate to consider our construction as a `similarity elevator'. To verify quasiconformality somewhere deep in the construction, we ride the `similarity elevator' back to the original scale without accruing distortion.

The iterated function systems are chosen so that in the domain, the invariant set is subordinate to the foliation by copies of $L$, while in the target, combinatorial considerations ensure large dimension for the images of leaves of the foliation. A delicate honing of parameters yields a construction verifying Theorem \ref{BMT sharp}.

In the plane, the sharp relationship between the dilatation of a quasiconformal mapping and its Sobolev exponent was established by Astala \cite{Astala}. This relationship leads to sharp estimates of the Hausdorff dimension distortion of subsets in terms of the dilatation. However, Astala's dimension estimates are not sharp for lines; the sharp estimates were established by Smirnov \cite{Smirnov}. It would be very interesting to have a version of Theorem~\ref{BMT sharp} in which the role of the Sobolev exponent is assumed by the dilatation. As Iwaniec's conjecture \cite{Iwaniec} remains open, this seems tractable only when $n=2$.

The general theory of distortion of dimension of leaves of a foliation by Sobolev mappings can be extended to a large class of foliated metric spaces \cite{Metric}, and is of particular interest in the sub-Riemannian Heisenberg group \cite{Heis}. There, the sharpness of dimension distortion estimates analogous to Theorem \ref{QC_lines} remains unknown even for Sobolev mappings.

\

\paragraph{\bf Acknowledgements.} Research for this paper was initiated during the authors' attendance at the Eighth School in Analysis and Geometry in Metric Spaces held in Levico Terme, Italy in Summer 2014 and continued during a visit by the second and third authors to the University of Bern in Summer 2015. The hospitality of the Institute of Mathematics at the University of Bern is gratefully acknowledged.

\section{Quasiconformal mappings and iterated function systems}\label{IFS section}

\subsection{Pushing forward an iterated function system} In this section, we establish notation for iterated function systems in $\reals^n$ and describe how to build mappings of $\reals^n$ using such systems.

Let $\mathcal{I} = \{f_i \colon \reals^n \to \reals^n\}_{i=1}^N$ be a collection of contracting similarities of $\reals^n$ that satisfies the \emph{strong separation condition} \cite{Hochman}: there exists a bounded open set $Q$ such that for each $i=1,\hdots, N$, the set $f_i(\ovl{Q})$ is contained in $Q$, and the sets $\{f_i(\ovl{Q})\}_{i=1}^N$ are disjoint.
This condition implies that the invariant set of $\mathcal{I}$ is uniformly (and hence totally) disconnected.

Now, let $\mathcal{J} = \{g_i \colon \reals^n \to \reals^n\}_{i=1}^N$ and $S$ be another such collection and open set, and consider any continuous mapping
$$\phi \colon \reals^n \bslash \left(\bigcup_{i=1}^N f_i(Q)\right) \to \reals^n \bslash \left(\bigcup_{i=1}^N g_i(S)\right)$$
satisfying the \emph{compatibility condition}: there exists $\ep>0$ such that for each $y \in \reals^n \bslash Q$ with $\dist(y,Q) \leq \ep$ and each $i=1,\hdots, N$,
\begin{equation}\label{compatible} \phi(f_i(y)) = g_i(\phi(y).
\end{equation}
We will call such a mapping a \emph{generating mapping}.

To extend a generating mapping to a mapping on all of $\reals^n$ compatible with $\mathcal{I}$ and $\mathcal{J}$, we will employ the notation of symbolic dynamics. Let $\mathcal{S}$ be the collection of all finite and infinite sequences with entries in $\{1,\hdots,N\}$. The length of a sequence $\sigma \in \mathcal{S}$ is denoted by $|\sigma|$.
Given any sequence $\sigma \in \mathcal{S}$, we denote the initial sequence of $\sigma$ of length $k \in \nats$ by $\sigma|_k$. Finally, for $\sigma$ in $\mathcal{S}$ of finite length, we define a similarity $f_\sigma \colon \reals^n \to \reals^n$ by
$$f_{\sigma} = f_{\sigma_1}\circ f_{\sigma_2} \circ \cdots.$$

Let $\phi$ be a generating mapping. We define the \emph{generated mapping} $\Phi \colon \reals^n \to \reals^n$ inductively, as follows. First, we declare that  $$\Phi|_{\reals^n \bslash \left(\bigcup_{|\sigma|=1} f_\sigma(Q)\right)}=\phi.$$
Now, assume that for some integer $k \geq 1,$ the mapping $\Phi$ has been defined on
$$\reals^n \bslash \left(\bigcup_{|\sigma|=k} f_\sigma(Q)\right).$$
For each sequence $\sigma \in \Sigma$ of length $k$ and each $i=1,\hdots,N$, we define
$$\Phi|_{f_{\sigma}(\ovl{Q})\bslash \left(\bigcup_{i=1}^N f_{\sigma,i}(Q)\right)} = g_\sigma \circ \phi \circ f_\sigma\inv.$$
Finally, the invariant sets for the systems $\mathcal{I}$ and $\mathcal{J}$ are
$$K_{\mathcal{I}}= \bigcap_{k\in \nats }\bigcup_{|\sigma|=k}f_{\sigma}(Q) \ \ \text{and} \ \ K_{\mathcal{J}} =\bigcap_{k\in \nats }\bigcup_{|\sigma|=k}g_{\sigma}(S),$$
respectively. The strong separation condition guarantees each point of each invariant set can be uniquely identified with an infinite sequence in $\mathcal{S}$. As a result, $\Phi$ extends canonically to a bijection between $K_I$ and $K_J$, completing its definition.
\subsection{Quasiconformal generated mappings}
We employ the following metric definition of quasiconformality for mappings between subsets of $\reals^n$:
\begin{definition} Let $\Phi \colon \Omega \to \Omega'$ be a homeomorphism between subsets of $\reals^n$, and for all $x \in \Omega$ and $r > 0$, define
$$L_\Phi(x, r) :=\sup\{|\Phi(x)-\Phi(y)|: |x-y|\leq r, \ y\in \Omega \},$$
$$ l_\Phi(x,r):=\inf\{|\Phi(x)-\Phi(y)|: |x-y| \geq r, \ y\in \Omega \},$$
$$H_\Phi(x):=\limsup_{r \to 0} \frac{L_\Phi(x,r)}{l_\Phi(x,r)}.$$
The mapping $\Phi$ is \emph{$H$-quasiconformal}, $H \geq 1$, if $H_\Phi(x)\leq H$ for all $x \in \Omega$.
\end{definition}
A fundamental theorem of Gehring \cite{GehringDef} implies that the above definition coincides with other standard definitions of quasiconformal mappings if $\Omega$ is an open subset of $\reals^n$ and $\Phi$ is orientation preserving.

Note that if $f$ and $g$ are similarities of $\reals^n$, and $\phi \colon \Omega \to \Omega'$ is a homeomorphism of subsets of $\reals^n$, then for each $x \in \Omega$,
\begin{equation}\label{invariance} H_\phi(x) = H_{g \circ \phi \circ f\inv}(f(x)).
\end{equation}

For the remainder of this section, we consider iterated function systems $\mathcal{I}$ and $\mathcal{J}$ with the strong separation condition, a generating mapping $\phi$, and a generated mapping $\Phi$ as in the previous section, along with the relevant notation established there.

The goal of this section is to prove the following proposition. In this generality, the result does not seem to be present in the literature. However, the basic idea can be found in \cite[Theorem~5]{GV}.

\begin{proposition}\label{qc pasting} Suppose that the generating mapping $\phi$ is  quasiconformal homeomorphism. Then the generated mapping $\Phi \colon \reals^n \to \reals^n$ is a quasiconformal homeomorphism satisfying  $\Phi(K_\mathcal{I})=K_\mathcal{J}$.
\end{proposition}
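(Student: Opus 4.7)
The plan is as follows. Once $\Phi$ is known to be a homeomorphism of $\reals^n$, the identity $\Phi(K_\mathcal{I}) = K_\mathcal{J}$ is immediate from the symbolic construction, since the bijective correspondence between points of the two invariant sets and infinite sequences in $\mathcal{S}$ is precisely how $\Phi$ is defined on $K_\mathcal{I}$. So the task reduces to two points: showing that $\Phi$ is a continuous bijection, and bounding the metric distortion $H_\Phi(x)$ at every $x \in \reals^n$.

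For well-definedness and continuity of $\Phi$, the only delicate issue is consistency at the boundaries $\partial f_\sigma(Q)$, where the inductive recipe supplies both a parent-level and a child-level formula that must agree. This is the content of the compatibility condition \eqref{compatible}: iterating it, one sees that in an outer neighborhood of each $\partial f_\sigma(Q)$ the two expressions coincide. Continuity at $x \in K_\mathcal{I}$ with coding $\sigma$ then follows because $\diam g_{\sigma|_k}(S) \to 0$, so the nested sets $\Phi(f_{\sigma|_k}(\ovl{Q})) \subseteq g_{\sigma|_k}(\ovl{S})$ shrink to $\Phi(x)$. Injectivity on $K_\mathcal{I}$ uses the uniqueness of symbolic codings guaranteed by the strong separation condition.

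For $x \notin K_\mathcal{I}$, I would locate a finite $\sigma$ and a genuinely open neighborhood $U \ni x$ on which $\Phi = g_\sigma \circ \phi \circ f_\sigma^{-1}$; the compatibility condition supplies just enough room near $\partial f_\sigma(Q)$ to make $U$ open in $\reals^n$ rather than merely relatively open in a piece. Equation \eqref{invariance}, applied with $f = f_\sigma$ and $g = g_\sigma$, then yields $H_\Phi(x) = H_\phi(f_\sigma^{-1}(x)) \leq H$, where $H$ is the distortion constant of $\phi$.

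The main obstacle is bounding $H_\Phi(x)$ for $x \in K_\mathcal{I}$, and this is the ``similarity elevator'' step. The recursive construction implies the self-similar identity $\Phi|_{f_{\sigma|_k}(\ovl{Q})} = g_{\sigma|_k} \circ \Phi \circ f_{\sigma|_k}^{-1}$, so by \eqref{invariance},
$$\frac{L_\Phi(x,r)}{l_\Phi(x,r)} = \frac{L_\Phi(\tilde x_k, \tilde r_k)}{l_\Phi(\tilde x_k, \tilde r_k)}, \qquad \tilde x_k := f_{\sigma|_k}^{-1}(x) \in K_\mathcal{I} \cap \ovl{Q}, \quad \tilde r_k := r / \Lip(f_{\sigma|_k}),$$
valid whenever $B(x,r) \subseteq f_{\sigma|_k}(\ovl Q)$, which holds for small $r$ by strong separation. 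For each sufficiently small $r > 0$, I would select $k = k(r)$ so that $\tilde r_k$ lies in a fixed compact interval $[c_1, c_2] \subset (0,\infty)$; this is possible since the consecutive ratios $\Lip(f_{\sigma|_{k+1}})/\Lip(f_{\sigma|_k})$ are bounded away from $0$ and $1$. The problem thus reduces to showing a uniform upper bound on $L_\Phi(\tilde x, \tilde r)/l_\Phi(\tilde x, \tilde r)$ over $(\tilde x, \tilde r) \in (K_\mathcal{I} \cap \ovl{Q}) \times [c_1, c_2]$. The numerator is controlled by continuity of $\Phi$ on a fixed bounded set; the denominator is bounded below by uniform continuity of $\Phi^{-1}$ on a bounded set (together with properness of $\Phi$ to handle far-away $y$), which is legitimate once $\Phi$ is a homeomorphism. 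Taking $\limsup$ as $r \to 0$ yields a finite bound on $H_\Phi(x)$. The resulting constant $H'$ will in general exceed $H$ and will depend on $H$, $\mathcal{I}$, and $\mathcal{J}$, but quasiconformality with this larger constant is all the proposition demands.
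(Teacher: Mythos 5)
Your argument reaches the same conclusion, but the key step --- bounding $H_\Phi$ at points of the invariant set --- is handled by a genuinely different and softer route than the paper's. The paper works directly at the original scale: using the separation constants $d$ and $\delta$ and a fixed lag $\kappa$, it sandwiches $\Phi(B(x,r))$ between nested pieces $g_{\sigma|_{j+1+\kappa}}(S) \subeq \Phi(B(x,r)) \subeq g_{\sigma|_j}(S)$, hence between two balls centered at $\Phi(x)$ whose radii have explicitly controlled ratio; this gives the concrete bound $H_\Phi(x) \leq 2\diam S/(\tau^{1+\kappa}\delta)$ on $K_{\mathcal{I}}$, which notably does not involve $H$ at all. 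You instead ride the similarity elevator back to unit scale and invoke compactness: uniform continuity of $\Phi$ and $\Phi^{-1}$ on bounded sets yields a finite but inexplicit bound. Both are legitimate (your use of the homeomorphism property is not circular, since continuity and bijectivity of $\Phi$ are established before any distortion estimate); the paper's version is what substantiates its remark that the constant depends only on $H$ and the IFS data, while yours is shorter once the homeomorphism property is in hand.

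One step needs repair. The asserted identity $L_\Phi(x,r)/l_\Phi(x,r) = L_\Phi(\tilde x_k,\tilde r_k)/l_\Phi(\tilde x_k,\tilde r_k)$ is not literally true: $l_\Phi(x,r)$ is an infimum over \emph{all} $y$ with $|x-y|\geq r$, including $y \notin f_{\sigma|_k}(\ovl{Q})$, where $\Phi$ is not the rescaled copy of $\Phi|_{\ovl{Q}}$. The fix is an inequality in the direction you need: for such exterior $y$ one has $\Phi(y) \notin g_{\sigma|_k}(S)$, while $\Phi(x)$ lies in $g_{\sigma|_{k+1}}(\ovl{S})$, at distance at least $2\tau_{\sigma|_k}\delta$ from the complement of $g_{\sigma|_k}(S)$ --- precisely the inclusion \eqref{image incl/supcl} used in the paper --- so
$$l_\Phi(x,r) \geq \tau_{\sigma|_k}\,\min\bigl\{2\delta,\ l_{\Phi|_{\ovl{Q}}}(\tilde x_k,\tilde r_k)\bigr\},$$
and the numerator identity $L_\Phi(x,r) = \tau_{\sigma|_k} L_{\Phi|_{\ovl{Q}}}(\tilde x_k,\tilde r_k)$ does hold once $B(x,r)\subeq f_{\sigma|_k}(Q)$. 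With that correction your compactness argument closes.
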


The proof will show that the quasiconformality constant of $\Phi$ depends only on the quasiconformality constant of $\phi$, the scaling ratios associated to the iterated function systems $\mathcal{I}$ and $\mathcal{J}$, and a geometric quantity associated to the strong separation condition for $\mathcal{I}$ and $\mathcal{J}$.

Quasiconformality of $\Phi$ on the complement of the invariant set follows from the construction and the fact that the iterated function system is comprised of similarities.
On the invariant set, we use the fact that there are only finitely many open sets involved in the strong separation condition. This step differs from the approach taken in \cite{GV}, in which the open set of the strong separation condition is a cube. One can also verify quasiconformilty via removability results for quasiconformal mappings such as \cite[Theorem~4.2]{Def}, or in the planar setting \cite{Gotoh}. Such removability results are valid in wide generality, see \cite{BK} or \cite{BKR}.


\begin{proof}[Proof of Proposition \ref{qc pasting}] We will show that for each $x \in \reals^n$,
\begin{equation}\label{PhiQC} H_{\Phi}(x)  \leq H.\end{equation}
This is immediate for points of
$$\reals^n \bslash \left(\bigcup_{i=1}^N \ovl{f_i(Q)}\right),$$
and by \eqref{invariance} it also holds at each point in the orbit of this set under the iterated function system of similarities $\mathcal{I}$. Moreover, \eqref{PhiQC} holds by definition at points of $\partial Q$, but this does not automatically imply \eqref{PhiQC} at points in the orbit of $\partial{Q}$, as the construction is ``glued together" at these points. To this end, suppose that $x \in \partial f_i(Q)$, where $1 \leq i \leq N$. If $0<r<\ep$,  then the compatibility condition \eqref{compatible} and the definition of the generated mapping $\Phi$ imply that
$$\Phi|_{B(x,r)} = g_i \circ \phi \circ f_i\inv|_{B(x,r)}.$$
Hence, \eqref{invariance} implies that
$$H_\Phi(x) = H_{\phi}(f_i\inv(x)) \leq H.$$
Thus, we only need to verify  \eqref{PhiQC} at points of $K_{\mathcal{I}}$.



 Finally, let $x \in K_{\mathcal{I}}$. Associated to $x$ is a unique infinite sequence $\sigma \in \mathcal{S}$, so that
$$\{x\} = \bigcap_{j \in \nats} f_{\sigma|_j}(\ovl{Q}),$$
where $\sigma|_j$ is the truncation of $\sigma$ to length $j \in \nats$.
For $i=1,\hdots, N$, let $t_i$ and $\tau_i$ be the scaling ratios of the similarities $f_i$ and $g_i$, respectively. Moreover, for any integer $j \geq 1$, denote the scaling ratios of $f_{\sigma|_j}$ and $g_{\sigma|_j}$ by $t_{\sigma|_{j}}$ and $\tau_{\sigma|_{j}}$.

Set
$$d = \frac{1}{2} \dist\left(\bigcup_{i=1}^N(f_i(\ovl{Q})), \reals^n\bslash Q\right).$$
Then for any integer $j \geq 1$,
$$t_{\sigma|_j} d  < \dist\left(f_{\sigma|_{j+1}}(\ovl{Q}), \reals^n\bslash f_{\sigma|_j}(Q)\right),$$
which implies that
\begin{equation}\label{incl} B(x,t_{\sigma|_{j}}d) \subeq f_{\sigma|_j}(Q) \subeq B(x, 2t_{\sigma|_{j}}\diam Q).
\end{equation}
By the definition of $\Phi$, it holds that
$$\{\Phi(x)\} = \bigcap_{j \in \nats} g_{\sigma|_j}(\ovl{S}).$$
Setting
$$\del = \frac{1}{2} \dist\left(\bigcup_{i=1}^N(g_i(\ovl{S})), \reals^n\bslash S\right),$$
an analogous argument shows that for any integer $j \geq 1$,
\begin{equation}\label{image incl/supcl} B(\Phi(x),\tau_{\sigma|_{j}}\del) \subeq g_{\sigma|_{j}}(S) \subeq B(x,2\tau_{\sigma|_{j}}\diam{S})).
\end{equation}

Choose an integer $\kappa \geq 1$ such that
$$\left(\max_{i=1,\hdots, N} t_i \right)^{\kappa-1} < \frac{d}{2\diam Q}.$$
Note that $\kappa$ depends only on the iterated function systems $\mathcal{I}$ and $\mathcal{J}$, and not on $x$.

Then, for any integer $j \geq 1$,
$$2t_{\sigma|_{j+\kappa}}\diam Q < t_{\sigma|_{j+1}} d.$$
Thus, if $t_{\sigma|_{j+1}}d \leq r < t_{\sigma|_{j}}d,$
then \eqref{incl}, \eqref{image incl/supcl}, and the definition of $\Phi$ imply that
$$B(\Phi(x), \tau^{j+1+\kappa} \del) \subeq g_{\sigma|_{j+1+\kappa}}(S) \subeq \Phi(B(x,r)) \subeq g_{\sigma|_{j}}(S) \subeq B(\Phi(x),2\tau^j \diam S).$$
This yields
$$\frac{L_{\Phi}(x,r)}{l_\Phi(x,r)} \leq \frac{2\diam S}{\tau^{1+\kappa}\del}.$$
As the integer $j \geq 1$ was arbitrary, this shows that
$$H_{\Phi}(x) \leq \frac{2\diam S}{\tau^{1+\kappa}\del}$$
as well, completing the proof.
\end{proof}
\begin{remark}\label{no bounds} If $\partial{Q}$ is assumed to have $\sigma$-finite $(n-1)$-dimensional Hausdorff measure, then the compatibility condition \eqref{compatible} can be weakened to: for each $y \in \partial{Q}$ and each $i=1,\hdots, N$,
$$\phi(f_i(y))=g_i(\phi(y)).$$
This is due to classical removability results, which can be found in \cite{GehringDef}, \cite{GehringRings}.
\end{remark}

\subsection{Almost full iterated function systems with the strong separation condition}

It is not possible to find 15 disjoint closed squares of side-length $1/4$ inside an open square of side-length $1$, even though the total area of the smaller squares is less than the area of the larger square. In this section, we will show that by beginning with a product of intervals that is not a cube, we may pack as many scaled copies as is allowed by volume considerations. This will be a crucial part of our construction.

\begin{proposition}\label{rectangles} Let $n \geq 1$ be an integer, and let $0<r<1$. For each integer $1 \leq M<r^{-n}$, there is an iterated function system of orientation preserving contracting similarities $\mathcal{I}=\{f_i \colon \reals^n \to \reals^n\}_{i=1}^M$, each with scaling ratio $r$, that satisfies the strong separation condition on a product of $n$ bounded open intervals.
\end{proposition}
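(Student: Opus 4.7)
The plan is to take $Q$ as a product of $n$ intervals whose lengths form a near-geometric progression of ratio $1/r$ and to use similarities $f_i$ sharing a common orientation-preserving rotation that cyclically shifts coordinate axes. This makes the rotated scaled box $R(\overline{rQ})$ match $Q$ up to a tiny slack in $n-1$ directions while being vastly shorter than $Q$ in the one remaining direction, leaving room to stack $M$ translated copies along that long direction.

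Concretely, I would fix parameters $\epsilon_1 > \epsilon_2 > \cdots > \epsilon_n > 0$ to be chosen small, and set $L_i := r^{-(i-1)}(1+\epsilon_i)$ and $Q := \prod_{i=1}^n (0, L_i)$. I would take $R\colon \reals^n \to \reals^n$ to be the orthogonal map
$$R(x_1, \ldots, x_n) = (x_2, x_3, \ldots, x_n, \pm x_1),$$
with the sign chosen ($+$ if $n$ is odd, $-$ if $n$ is even) so that $\det R = +1$, compensating for the parity of the $n$-cycle permutation of coordinates. After a translation, $R(\overline{rQ})$ is the closed axis-aligned box with side length $rL_{j+1} = r^{-(j-1)}(1+\epsilon_{j+1})$ in direction $j \leq n-1$ and side length $rL_1 = r(1+\epsilon_1)$ in direction $n$.

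I would then define the $M$ similarities by $f_i(x) := t_i + rRx$ for $i = 1, \ldots, M$, where $t_i$ is the translation placing the rotated scaled box at the $i$-th slot in a stack along direction $n$, with a uniform small gap $\eta > 0$ between consecutive closures and between the closures and the boundary of $Q$ in direction $n$. Feasibility in direction $n$ amounts to
$$M \cdot r(1+\epsilon_1) + (M+1)\eta \leq r^{-(n-1)}(1+\epsilon_n),$$
which rearranges to $Mr^n(1+\epsilon_1) + O(\eta) \leq 1+\epsilon_n$ and, since $M < r^{-n}$, can be satisfied by taking for instance $\epsilon_i := (n+1-i)\delta$ and $\eta := \delta^2$ with $\delta > 0$ sufficiently small. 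Meanwhile, in each direction $j \leq n-1$ the strict inequality $\epsilon_j > \epsilon_{j+1}$ provides positive slack $r^{-(j-1)}(\epsilon_j - \epsilon_{j+1}) > 0$, so placing the boxes centered in those directions keeps each $f_i(\overline{Q})$ strictly inside $Q$; pairwise disjointness follows from the gap $\eta$ in direction $n$, verifying the strong separation condition.

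The main delicate point is calibrating the $\epsilon_i$'s so that both conditions hold: strictly decreasing enough to provide positive slack in the $n-1$ short directions, yet collectively small enough that $(1+\epsilon_n)/(1+\epsilon_1)$ stays close to $1$ so as to accommodate $M$ stacked copies in the long direction. The orientation-preservation sign adjustment is routine bookkeeping, so the proof reduces to a parameter-chasing computation once the geometric setup of $Q$ and $R$ is identified.
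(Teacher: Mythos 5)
Your proof is correct and follows essentially the same route as the paper: a box whose side lengths form a near-geometric progression of ratio $1/r$, together with an orientation-preserving similarity of ratio $r$ that cyclically permutes the coordinate axes, so that the scaled image nearly fills $Q$ in $n-1$ directions and $M$ translated copies stack along the remaining long direction. The paper encodes your $\epsilon_i$-calibration as the chain of inequalities $\tfrac{h_n}{r^n} > \tfrac{h_{n-1}}{r^{n-1}} > \cdots > \tfrac{h_1}{r} > M$ (stacking along the first coordinate rather than the last), which is the same parameter-chasing up to relabeling.
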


\begin{proof} The statement is trivial if $n=1$, and so we assume that $n \geq 2$. Choose positive numbers $h_1, \hdots, h_n=1$ such that
$$\frac{1}{r^n}= \frac{h_n}{r^{n}} > \frac{h_{n-1}}{r^{n-1}} > \hdots > \frac{h_{1}}{r} > M.$$
Let $$Q=\left(\prod_{i=1}^{n-1}(0,h_i)\right) \times (0,1) \subeq \reals^n.$$
Denote by $F \colon \reals^n \to \reals^n$ an orientation preserving similarity of $\reals^n$ that has scaling ratio $r$ and permutes the coordinate axes
$$L_i:=\{(x_1,\hdots,x_n) \in \reals^n : x_j = 0 \ \text{if}\ j\neq i\}, \ i=1,\hdots,n$$ as follows
$$L_n \to L_{1} \to \hdots \to L_{n-1} \to L_n.$$
Since $Mr<h_{1}$, there are $M$ disjoint closed intervals of length $r$ inside the open interval $(0,h_{1})$. Similarly, for each $i=1, \hdots n-1$, since $rh_{i}<h_{i+1}$, there is a disjoint closed interval of length $rh_{i}$ insider the open interval $(0,h_{i+1}).$ This implies that we may find $M$ vectors $v_1,\hdots v_M$ such that the similarities
$$\{f_i = F+ v_i\}_{i=1}^M$$
satisfy the desired strong separation condition on $Q$.
\end{proof}

\section{Quasiconformal mappings of maximal frequency of dimension distortion}

We now apply Theorem \ref{qc pasting} to prove Theorem \ref{BMT sharp}. We assume, without loss of generality, that $L$ is the one-dimensional subspace of $\reals^n$ generated by the last coordinate direction. More precisely, define $\pi \colon \reals^n \to \reals^{n-1}$ by
$$\pi(x_1,\hdots,x_n) \mapsto (x_1,\hdots,x_{n-1}).$$
Then $L$ is the kernel of $\pi$, and for each $a \in \reals^{n-1}$,
$$(a,0)+L = \pi\inv(a).$$
We denote the complementary projection by $\pi^\perp \colon \reals^n \to \reals$, where
$$\pi^\perp(x_1,\hdots,x_n) = x_n.$$

\begin{proof} As in the statement of Theorem~\ref{BMT sharp}, we fix $p>n\geq 2$ and
$\alpha \in \left[1, \frac{p}{p-(n-1)}\right).$
Let
$$0< \beta < \hat{\beta}:=(n-1)- p\left(1-\frac{1}{\alpha}\right).$$
Then
$$ (n-1)-\beta  >0, \ \text{and}\  \frac{n}{\alpha}-\beta - 1 > 0.$$
We will choose a parameter $d$ so that several inequalities are satisfied. Each inequality will hold when $d$ is sufficiently small; rather than choose the smallest requirement, we explicitly list the requirements separately for the reader's convenience. Specifically, choose $d>0$ so that
\begin{equation}\label{min} d < \min \begin{cases}
											\left(\frac{1}{2}\right)^{1/\beta} & (1),\\
											\left(2^\beta -1 \right)^{1/\beta} & (2),\\
											 2^{-\beta/(n-1-\beta)} & (3),\\
											2^{-(1+\alpha)}& (4),\\
											 1-2^{-\alpha} & (5), \\
											 \left(2^{-\beta}3^{-n}\right)^{1/(\frac{n}{\alpha}-\beta - 1)} & (6),\\
											\left(2^{-\beta}3^{-p}\right)^{1/(\hat{\beta} - \beta)} & (7). \\
\end{cases}\end{equation}
Employing terms (1)-(3) in \eqref{min}, we see that there is an integer $M$ so that the following inequalities are satisfied:
\begin{equation}\label{M} 2 < \left(\frac{1}{d}\right)^\beta < M <  \left(\frac{2}{d}\right)^\beta < \left(\frac{1}{d}\right)^{n-1}.\end{equation}
Terms (4)-(5) in \eqref{min} allow us to find a number $t>0$ so that
\begin{equation}\label{t} 2d^{1/\alpha} < t < \min\left\{2^{-1/\alpha}, \left( 2^\alpha -1 \right)^{1/\alpha}, \ 3d^{1/\alpha} \right\}.\end{equation}
Hence there is an integer $M'$ so that the following inequalities are satisfied:
\begin{equation}\label{M'} 2 < \left(\frac{1}{t}\right)^\alpha \leq M' <  \left(\frac{2}{t}\right)^\alpha < \left(\frac{1}{d}\right).\end{equation}
Terms (6) and (7) in \eqref{min} will be employed later in the construction.

Since $M < d^{-(n-1)}$, Proposition \ref{rectangles} yields an iterated function system
$$\mathcal{K}=\{h_1,\hdots,h_{M} \colon \reals^{n-1} \to \reals^{n-1}\}$$
of orientation preserving contracting similarities with contraction ratio $d$ that satisfies the strong separation  condition on a product $Q_{n-1}$ of $(n-1)$ bounded open intervals in $\reals^{n-1}$. Similarly, since $M' < d\inv$, we may find an iterated function system $$\mathcal{K}'=\{h'_1,\hdots,h'_{M'}\colon \reals \to \reals\}$$
of orientation preserving contracting similarities with ratio $d$ that satisfy the strong separation condition on $(0,1)$.  Denote by $K_{\mathcal{K}}$ and $K_{\mathcal{K}'}$ the invariant sets of $\mathcal{K}$ and $\mathcal{K}'$. Since the strong separation condition implies the open set condition, the Moran--Hutchinson theorem \cite{Moran}, \cite{Hutchinson} implies that similarity dimension and the Hausdorff dimension of $K_{\mathcal{K}}$ agree; the same is true of $K_{\mathcal{K}'}$. Hence, \eqref{M}, and \eqref{t} imply that
$$\dim K_{\mathcal{K}} = \frac{\log M}{\log d\inv} > \beta, \ \text{and} \ \dim  K_{\mathcal{K}'} = \frac{\log M'}{\log d\inv} < 1.$$

We define the \emph{product} iterated function system
$$\mathcal{K}\times \mathcal{K}' = \{h_{i,j} \colon \reals^n \to \reals^n: 1\leq i \leq M,\ 1 \leq j \leq M'\}$$ by setting
$$h_{i,j}(x_1,\hdots,x_n) = (h_i(x_1,\hdots,x_{n-1}), h'_j(x_n)).$$
Note that $\mathcal{K} \times \mathcal{K}'$ consists of orientation preserving contracting similarities of ratio $d$ and satisfies the strong separation condition on $Q:=Q_{n-1} \times (0,1)$.

We now claim that there is an iterated function system of orientation preserving contracting similarities
$$\mathcal{J} = \{g_{i,j} \colon \reals^n \to \reals^n: 1\leq i \leq M, \ 1\leq j \leq M'\}$$
that again satisfies the strong separation condition on product $S$ of $n$ bounded open intervals and is combinatorially equivalent to $\mathcal{K} \times \mathcal{K'}$, i.e., it contains the same number of mappings at the first iteration, but so that each mapping in $\mathcal{J}$ has the larger contraction ratio $t$ defined above. By Proposition \ref{rectangles}, this can be accomplished if
\begin{equation}\label{target restriction} MM'< t^{-n}.\end{equation}
Towards this end, note that \eqref{M}, \eqref{M'}, and \eqref{t}, imply that
$$MM't^n < \left(\frac{2}{d}\right)^\beta\left(\frac{1}{d}\right) \left(3d^{1/\alpha}\right)^n \leq  2^{\beta}3^nd^{\frac{n}{\alpha}-\beta-1}.$$
Hence, term (6) of \eqref{min} verifies \eqref{target restriction}.

Moreover, since $Q$ and $S$ are both the products of $n$ bounded open intervals, the strong separation condition allows us to produce a piecewise-linear and orientation preserving quasiconformal generating mapping $$\phi \colon \reals^n \bslash \left( \bigcup_{i,j} h_{i,j}(Q) \right) \to \reals^n \bslash \left( \bigcup_{i,j} g_{i,j}(S) \right).$$
Thus, we may apply Theorem \ref{qc pasting} to produce a quasiconformal mapping $\Phi \colon \reals^n \to \reals^n$ that canonically maps $K_{\mathcal{K}\times \mathcal{K}'}$ to $K_{\mathcal{J}}$.

The key point of this construction is that the invariant set $K_{\mathcal{K}\times \mathcal{K}'}$ is the product $K_\mathcal{K} \times K_{\mathcal{K}'}$. Hence, for each point $a \in K_\mathcal{K}$, the intersection of $\pi\inv(a)$ with $K_{\mathcal{K} \times \mathcal{K}'}$ is a copy of $K_{\mathcal{K}'}$. This set is then mapped by the quasiconformal mapping $\Phi$ onto a combinatorially equivalent Cantor set. The dimension of this Cantor set is specified by our choice the contraction ratio $t$ of the system $\mathcal{J}$.

For ease of notation, we now denote by $\mathcal{S}$ the collection of all finite and infinite sequences with entries in $\{1,\hdots M\}$ and by $\mathcal{S}'$ the collection of all finite and infinite sequences with entries in $\{1,\hdots,M'\}$. Given sequences $\sigma \in \mathcal{S}$ and $\tau \in \mathcal{S}'$, each of length $k \in \nats$, we write
$$
h_{(\sigma,\tau)} = h_{\sigma_k,\tau_k} \circ \hdots \circ h_{\sigma_1,\tau_1} \qquad \text{and} \qquad
g_{(\sigma,\tau)} = g_{\sigma_k,\tau_k} \circ \hdots \circ g_{\sigma_1,\tau_1}.
$$
Let $a \in K_\mathcal{K}$. By construction, there is an infinite sequence $\sigma \in \mathcal{S}$ such that
$$\{a\} = \bigcap_{k \in \nats} h_{\sigma|_k}(\pi(\ovl{Q}), $$
and hence
$$\pi\inv(a) \cap K_{\mathcal{K} \times \mathcal{K}'} =  \bigcap_{k \in \nats} \left( \bigcup_{\tau \in \mathcal{S}', |\tau|=k}
h_{(\sigma|_k,\tau)}(\ovl{Q})\right).$$
By construction, for each $k \in \nats$ and each $\tau \in \mathcal{S}'$ of length $k$,
$$\Phi \circ h_{(\sigma|_k,\tau)}(\ovl{Q}) = g_{(\sigma|_k,\tau)}(\ovl{S}).$$
Hence,
$$\Phi(\pi\inv(a) \cap K_{\mathcal{K} \times \mathcal{K}'}) = \bigcap_{k \in \nats} \left( \bigcup_{\tau \in \mathcal{S}', |\tau|=k} g_{(\sigma|_k,\tau)}(\ovl{S})\right).$$
A standard argument (see, e.g., \cite[Section~4.12]{Mattila}) and \eqref{t} now imply that dimension of this Cantor-type set satisfies
$$\dim \Phi(\pi\inv(a)\cap K_{\mathcal{K} \times \mathcal{K}'}) = \frac{\log M'}{\log t\inv} > \alpha.$$
Hence, for each point $a \in K_\mathcal{K}$, the image of the fiber $\pi\inv(a)$ under the map $\Phi$ has dimension greater than $\alpha$.

Since $\Phi$ is a piece-wise linear homeomorphism on $\ovl{Q} \bslash \bigcup_{i,j} h_{i,j}(Q)$,
$$\int_{\ovl{Q} \bslash \bigcup_{i,j} h_{i,j}(Q)} |D\Phi|^p \ d\Hdim^n =: C< \infty.$$
We may now conclude from the self-similar nature of $\Phi$ that
$$\int_{\ovl{Q}} |D\Phi|^p \ d\Hdim^n \leq C\sum_{l=0}^\infty \left(MM'\left(\frac{t}{d}\right)^{p}d^{n}\right)^l$$
Again applying \eqref{M}, \eqref{t}, and \eqref{M'}, we estimate
$$MM'\left(\frac{t}{d}\right)^{p}d^{n} < \left(\frac{2}{d}\right)^\beta \left(\frac{1}{d}\right) (3d^{\frac{1}{\alpha}})^{p} d^{n-p} = 2^\beta 3^p d^{\hat{\beta} - \beta}.$$
Term (7) of \eqref{min} now implies that $MM'(t/d)^{p}d^{n}<1$ and so
$$\int_{\ovl{Q}} |D\Phi|^p \ d\Hdim^n <\infty.$$
Since $\Phi$ can be chosen to be linear off $\ovl{Q}$, this implies that $\Phi \in \W^{1,p}_{\loc}(\reals^n;\reals^n).$
\end{proof}
\bibliographystyle{plain}
\bibliography{QuasiDistortion}

\end{document}